\newtheorem{theorem}{\bf Theorem}
\newtheorem{assumption}{\bf Assumption}
\newtheorem{lemma}[theorem]{\bf Lemma}
\newcommand{\bepsilon}{\mbox{\boldmath{$\varepsilon$}}}
\def\be{\begin{equation}}
\def\ee{\end{equation}}
\def\ben{\begin{eqnarray}}
\def\een{\end{eqnarray}}
\newcommand{\bR}{\mathbb{R}}
\newcommand{\bH}{\mathbb{H}}
\newcommand{\bQ}{\mathbf{Q}}
\newcommand{\bL}{\mathbb{L}}
\newcommand{\bphi}{\mbox{\boldmath{$\phi$}}}
\newcommand{\bpsi}{\mbox{\boldmath{$\psi$}}}
\newcommand{\bmu}{\mbox{\boldmath{$\mu$}}}
\newcommand{\bzeta}{\mbox{\boldmath{$\zeta$}}}
\newcommand{\boeta}{\mbox{\boldmath{$\eta$}}}
\newcommand{\bPi}{\mbox{\boldmath{$\Pi$}}}
\newcommand{\cP}{\mathcal{P}}
\newcommand{\cT}{\mathcal{T}}
\newcommand{\cC}{\mathcal{C}}
\newcommand{\cS}{\mathcal{S}}
\newcommand{\cN}{\mathcal{N}}
\newcommand{\cI}{\mathcal{I}}
\newcommand{\bx}{\mathbf{x}}
\newcommand{\bu}{\mathbf{u}}
\newcommand{\bN}{\mathbb{N}}
\newcommand{\bV}{\mathbf{V}}
\newcommand{\bW}{\mathbf{W}}
\newcommand{\bM}{\mathbf{M}}
\newcommand{\tB}{\mathtt{B}}
\newcommand{\tK}{\mathtt{K}}
\newcommand{\tD}{\mathtt{D}}
\newcommand{\tA}{\mathtt{A}}
\newcommand{\tR}{\mathtt{R}}
\title{Two simple finite element methods for 
Reissner--Mindlin plates with clamped boundary condition}
\author{Bishnu P.~Lamichhane
\thanks{School of Mathematical \& Physical Sciences,
Mathematics Building - V127,
University of Newcastle,
University Drive,
Callaghan, NSW 2308, Australia, 
 {\tt Bishnu.Lamichhane@newcastle.edu.au}}}
\begin{document}
\maketitle

\begin{abstract}
We present two simple finite element methods for the 
discretization of Reissner--Mindlin plate equations with 
{\em clamped} boundary condition. 
These finite element methods are based on  
discrete Lagrange multiplier spaces from mortar finite element 
techniques. We prove optimal a priori error estimates 
for both methods.
\end{abstract}

{\bf Key words}
Reissner--Mindlin plate, finite element, 
Lagrange multiplier, biorthogonality, a priori error estimates \\

{\bf AMS subject classification}.
65N30, 74K20

 \section{Introduction}
There has been an extensive 
research effort to design  finite element methods 
for the Reissner--Mindlin plate equations over the last 
three decades. A finite element discretization of Reissner--Mindlin plate is 
a challenging task as a standard discretization locks when 
the plate thickness becomes too small. 
So the main difficulty is to avoid {\em locking} 
when the plate thickness becomes really small. 
There are now many {\em locking-free} finite element techniques with sound 
mathematical analysis for these equations 
\cite{AF89,BF91,AB93,CL95,Lov96,AF97,CS98,Bra96,FT00,Bra01,ACC02,Lo05}.
However, most of these finite 
element techniques are too complicated or expensive. 
In this paper, we present two very simple finite element methods 
for Reissner--Mindlin plate equations 
with {\em clamped} boundary condition. 
These finite element methods are based on a finite element 
method described in \cite{AB93} for Reissner--Mindlin plate equations
with {\em simply supported} boundary condition. 
We combine the idea of mortar finite elements with 
the finite element method proposed in \cite{AB93} 
to modify the discrete Lagrange multiplier space 
leading to optimal and efficient finite element schemes for 
Reissner--Mindlin plate equations with 
{\em clamped} boundary condition.  We propose two 
Lagrange multiplier spaces: one is based on 
a standard Lagrange multiplier space for the mortar 
finite element proposed in \cite{BD98}, and 
the other is based on a dual Lagrange multiplier space 
proposed in \cite{KLP01}.  The first one gives a continuous 
Lagrange multiplier, whereas the second one yields 
a discontinuous Lagrange multiplier. 
 The stability and optimal approximation properties are 
shown for both approaches.  
We note that the second 
approach with the discontinuous Lagrange multiplier space 
for Reissner--Mindlin plate equations
with {\em simply supported} boundary condition has not 
been presented before, where boundary modification is not necessary. 
However, we only focus on 
the {\em clamped} case as it is the most difficult case 
of the boundary condition in plate theory.  Moreover,
the second choice of the Lagrange multiplier space 
allows an efficient static condensation of 
the Lagrange multiplier leading to a positive definite 
system. Hence this approach is more efficient from 
the computational point of view.  We note that 
we use finite element spaces with equal dimension for 
the Lagrange multiplier and the rotation of the 
transverse normal vector.

The rest of the 
paper is planned as follows. 
The next section briefly recalls the Reissner--Mindlin plate 
equations in a modified form as given in \cite{AB93}. 
Section \ref{sec:fe} is the main part of the paper, where 
we describe our finite element methods and show the construction 
of discrete Lagrange multiplier spaces. Finally, 
a conclusion is drawn in the last section.

\section{A mixed formulation of Reissner--Mindlin plate}
Let $\Omega\subset \bR^2$ be a bounded region with 
polygonal  boundary. 
We need the following Sobolev spaces for the variational 
formulation of the Reissner--Mindlin plate with the plate 
thickness $t$: 
\[\bH^1(\Omega) = [H^1(\Omega)]^2,\quad 
\bH^1_0(\Omega) = [H^1_0(\Omega)]^2,\quad \text{and}\quad 
\bL^2(\Omega) = [L^2(\Omega)]^2.\]
We consider the following modified mixed formulation of 
Reissner--Mindlin plate with clamped boundary condition 
proposed in \cite{AB93}.  The mixed formulation 
is to find $(\bphi,u,\bzeta) \in 
\bH_0^1(\Omega)\times H^1_0(\Omega)\times \bL^2(\Omega)$ such that 
\begin{align*}
 a(\bphi,u;\bpsi,v)&+b(\bpsi,v;\bzeta)&=&\ell(v),\quad &(\bpsi,v) 
&\in &\bH_0^1(\Omega)\times H^1_0(\Omega), \\
b(\bphi,u;\boeta) &- \frac{t^2}{\lambda(1-t^2)} (\bzeta,\boeta)& =& 0, \quad 
&\boeta& \in& \bL^2(\Omega),
\end{align*} 
where $\lambda$ is a material constant depending on Young's modulus 
$E$ and Poisson ratio $\nu$, and 
\begin{eqnarray}
a(\bphi,u;\bpsi,v) &=& \int_{\Omega} \cC \bepsilon(\bphi):\bepsilon(\bpsi)\,d\bx +\lambda \int_{\Omega} (\bphi-\nabla u)\cdot(\bpsi-\nabla v)\,d\bx, \\
b(\bpsi,v;\boeta)& =& \int_{\Omega} (\bpsi-\nabla v)\cdot\boeta\,d\bx,\quad 
\ell(v) = \int_{\Omega} g\,v\,d\bx.
\end{eqnarray}
Here  $g$ is the body force, 
$u$ is the transverse displacement or normal deflection 
of the mid-plane section of $ \Omega$, $\bphi$ is the rotation of the 
transverse normal vector,  $\bzeta$ is the Lagrange 
multiplier, $\cC$ is the fourth order tensor, and 
$\bepsilon(\bphi)$ is the symmetric part of the gradient of $\bphi$. 
In fact, $\bzeta$ is the scaled shear 
stress defined by 
\[ \bzeta = \frac{\lambda (1-t^2)} {t^{2}} \left(\bphi - \nabla u\right).\]

\section{Finite element discretization}\label{sec:fe}
We consider a quasi-uniform triangulation $\cT_h$ of the 
polygonal domain $\Omega$ with mesh-size $h$, where $\cT_h$
consists of triangles. 
Now we introduce the standard linear finite element
space $K_h\subset H^1(\Omega)$ defined on the triangulation $\cT_h$ 
\[K_h := \{v \in H^1(\Omega) :\, v_{|_{T}} \in \cP_1(T),\; T \in \cT_h\},
\]
and the space of bubble functions 
\[
B_h := \{b_T\in \cP_3(T):\, {b_T}_{|_{\partial T}}=0,\;\text{and}\; 
\int_{T}b_T\,d\bx >0,\; T \in \cT_h\},
\]
where $\cP_n(T)$ is the space of polynomials of degree 
$n$ in $T$ for $ n \in \bN$. 
The bubble function on an element $T$ can be defined as 
$b_T(x)=c_b \Pi_{i=1}^{3}\lambda_{T^i}(x),$
where $\lambda_{T^i}(x)$ are the barycentric coordinates of the element $T$
associated with 
vertices $x_{T^i}$ of $T$, $i=1,\cdots,3$,
and the constant $c_b$ is chosen in such a way that 
the value of $b_T$ at the barycenter of $T$ is one.  

Let  $S_h = H_0^1(\Omega) \cap K_h$. 
A finite element method for the {\em simply supported}
Reissner--Mindlin plate is proposed in \cite{AB93} using 
the finite element spaces   $W_h:=S_h\oplus B_h$ to discretize 
the  transverse displacement,  $\bV_h:=[S_h]^2$ to 
 discretize the rotation, and $\bM_h:=[K_h]^2$ to 
discretize the Lagrange multiplier space. 
This is the lowest order case for the transverse 
displacement and rotation using 
the continuous piecewise linear shear approximation in \cite{AB93}. 
Hence the discretization uses 
 equal order interpolation for the rotation and the transversal 
displacement, and is one of the simplest finite element methods. 
However, for the {\em clamped} boundary condition we need to have 
$\bV_h \subset  \bH_0^1(\Omega)$, and hence the stability 
condition is violated if we use $\bM_h=[K_h]^2$ to discretize the Lagrange
multiplier space, and if we use $\bM_h=[S_h]^2$ to discretize 
the Lagrange multiplier space, the approximation property of 
the scheme is lost as the the Lagrange multiplier $\bzeta$ 
is not assumed to satisfy the zero boundary condition. Indeed, 
 the discrete space $\bM_h\subset \bL^2(\Omega)$ for 
the Lagrange multiplier space should have the 
following approximation property
\[
\inf_{\bmu_h \in \bM_h}\|\bphi-\bmu_h\|_{L^2(\Omega)}\leq 
Ch |\bphi|_{1,\Omega},\quad \bphi \in \bH^1(\Omega).
\]

Our goal in this paper is to introduce two discrete spaces 
for the Lagrange multiplier space so that the resulting 
scheme is stable and has the optimal approximation 
property for the {\em clamped} plate. We also introduce 
a scheme where the Lagrange multiplier can be statically 
condensed out from the system leading to a positive definite 
formulation.

We now start with  finite element spaces for the 
transverse displacement $u$ and the rotation  $\bphi$ as
$W_h:=S_h \oplus B_h$ and $\bV_h := [S_h]^2$,  respectively.   
Let \[ 
\{\varphi_1,\varphi_2,\cdots,\varphi_m,\varphi_{m+1},\cdots, \varphi_n\}\] 
be the standard finite element basis for $K_h$, where 
$n>m$ and  $\{\varphi_1,\varphi_2,\cdots,\varphi_m\}$
is a basis of $S_h$.   Note that the basis functions 
$\{\varphi_{m+1},\cdots, \varphi_n\}$ are associated with 
the boundary.  We use the idea of 
boundary modification of 
Lagrange multiplier spaces in mortar finite element methods 
\cite{BMP93,KLP01,Lam06} to 
construct a discrete Lagrange multiplier space for 
Reissner--Mindlin plate equations. 

Let $M_h\subset L^2(\Omega)$ be a piecewise 
polynomial space with respect to the mesh $\cT_h$  
to be specified later which satisfies the 
following assumptions:
\begin{assumption}\label{A1A2}
\begin{itemize} 
\item[\ref{A1A2}(i)] $\dim M_h = \dim S_h$.
\item[\ref{A1A2}(ii)] There is a constant $\beta>0$ independent of 
the triangulation $\cT_h$ such that 
\begin{eqnarray}
\|\phi_h\|_{L^2(\Omega)} \leq \beta \sup_{\mu_h \in M_h \backslash\{0\}} 
\frac{\int_{\Omega} \mu_h\phi_h\,d\bx} {\|\mu_h\|_{L^2(\Omega)}},
\quad \phi_h \in S_h.
\end{eqnarray}
\item[\ref{A1A2}(iii)] The space $M_h$ has the approximation property:
\begin{equation}
\inf_{\mu_h \in M_h}\|\mu-\mu_h\|_{L^2(\Omega)}\leq 
Ch |\mu|_{H^1(\Omega)},\quad \mu \in H^1(\Omega).
\end{equation}
\item[\ref{A1A2}(iv)]
There exist two bounded linear projectors  
$\bQ_h :\bH_0^1(\Omega) \rightarrow \bV_h$ 
and 
$R_h : H_0^1(\Omega) \rightarrow W_h$ 
for which 
\[ b( Q_h\bpsi,R_h v;\boeta_h) = b(\bpsi,v;\boeta_h),\quad 
\boeta_h \in [M_h]^2.\]
\end{itemize}
\end{assumption} 
If these assumptions are satisfied, we obtain 
an optimal error estimate for the finite element approximation, 
see \cite{AB93}. 
Then the discrete space for the Lagrange multiplier space is 
defined as 
\[\bM_h= [M_h]^2\subset \bL^2(\Omega), \]
and the discrete saddle point formulation is to 
find $(\bphi_h,u_h,\bzeta_h) \in \bV_h \times W_h \times \bM_h$ 
such that  
\begin{equation}\label{dsaddle}
\begin{array}{ccccccc}
a(\bphi_h,u_h;\bpsi_h,v_h)&+
b(\bpsi_h,v_h;\bzeta_h)&=&(g,v_h),\quad &(\bpsi_h,v_h) 
&\in &\bV_h \times S_h, \\
b(\bphi_h,u_h;\boeta_h) &- \frac{t^2}{\lambda(1-t^2)} (\bzeta_h,\boeta_h)& =& 0, \quad 
&\boeta_h& \in& \bM_h.
\end{array}
\end{equation}

We now show two examples of discrete Lagrange multiplier spaces satisfying
above properties. The first example is based on the standard Lagrange
multiplier space for three-dimensional mortar finite elements proposed
in \cite{BD98}, and the second example is based on a dual Lagrange 
multiplier space proposed in \cite{KLP01}. As the dual Lagrange
multiplier space satisfies a biorthogonality relation with the finite
element space $S_h$ leading to a diagonal Gram matrix, it allows 
an efficient solution technique. In fact, the Lagrange multiplier 
can be statically condensed out from the global system leading to
a reduced linear system in this case. This reduced linear system can
be solved more efficiently than the global saddle point system. The
Lagrange multiplier can easily be recovered 
just by inverting a diagonal matrix. 
One important factor in the construction of a Lagrange multiplier
space for the {\em clamped} boundary condition case is the boundary
modification so that Assumptions 1(i)--(iii) are satisfied. In order
to satisfy Assumption 1(iii), the discrete Lagrange multiplier space
should contain constants in $\Omega$. Therefore, it is not possible to
take $M_h =S_h$. Here we follow closely  \cite{BD98} 
for the construction and boundary modification of 
the discrete Lagrange multiplier space.

\subsection{Standard Lagrange multiplier space $\bM_h^1$}
In the following, we assume that each triangle has at least 
one interior vertex. A necessary modification 
for the case where a triangle has all its vertices on the boundary 
is given in \cite{BD98}. 

Let $\cN$, $\cN_0$ and $\partial\cN$ be the set of all vertices 
of $\cT_h$, the vertices of $\cT_h$ interior to $\Omega$, and 
the vertices of $\cT_h$ on the boundary of $\Omega$, 
respectively.  We define the set of all 
vertices which share a common edge with the vertex  $ i \in \cN$ as 
\[ \cS_i = \{j: \text{$i$ and $j$ share a common edge}\},\]
and the set of neighbouring vertices of $i \in \cN_0$ as 
\[ \cI_i =\{ j \in \cN_0:\,j \in \cS_i\}.\]

Then the set of all those interior vertices which have a neighbour on the 
boundary of $\Omega$ is defined as 
\[ \cI = \bigcup_{i \in \partial\cN}\cI_i,\qquad \text{See Figure \ref{SI}}.
\] 
 \begin{figure}[!ht]
\begin{center}
 \includegraphics[width =0.68\textwidth]{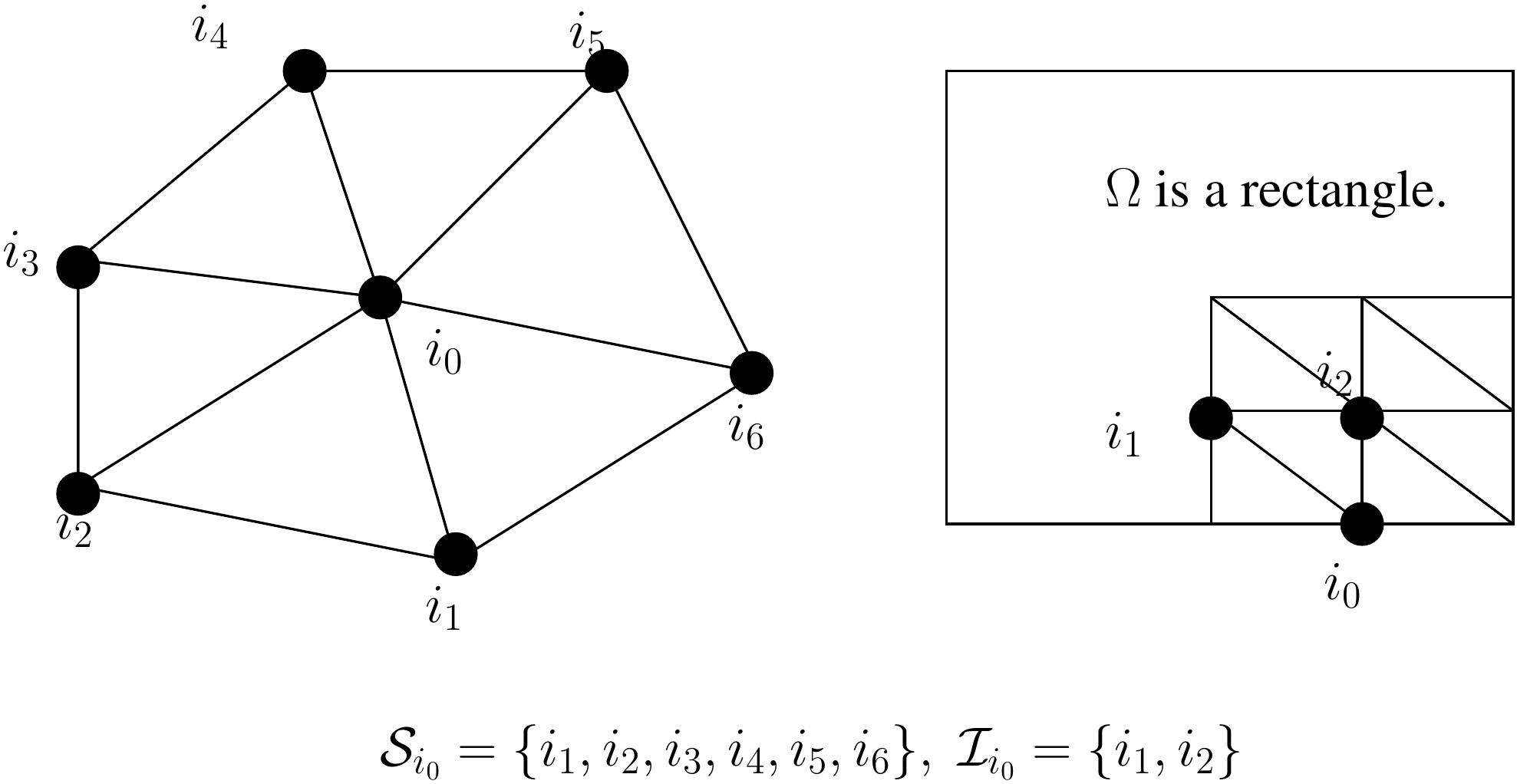}
 \caption{Examples for ${\mathcal S}_i$ and ${\mathcal I}_i$} 
\end{center}
 \label{SI}
\end{figure}

The finite element basis functions $\{\phi_1,\phi_2,\cdots,\phi_m\}$ 
for $M^1_h$ are defined as 
\[ \phi_i = \begin{cases} \varphi_i, & i \in \cN_0\backslash \cI\\
 \varphi_i + \sum_{j \in \partial \cN \cap \cS_i}A_{j,i} \varphi_j,\; 
A_{j,i} \geq 0, & i \in \cI
\end{cases}.
\]
We can immediately see that all the basis functions of $M^1_h$ are 
continuous, and $\dim M^1_h = \dim S_h$. Moreover, 
if the coefficients $A_{i,j}$ are 
chosen to satisfy 
\[ \sum_{j \in \cS_i} A_{i,j} =1,\quad i \in \cI,\]
Assumptions 1(ii) and 1(iii) are also satisfied,
see  \cite{BD98} for a proof. 
The vector Lagrange multiplier space is defined as $\bM^1_h =[M_h^1]^2$.

\begin{lemma}\label{lemma0}
There exist two bounded linear projectors  
$\bQ_h :\bH_0^1(\Omega) \rightarrow \bV_h$ 
and 
$R_h : H_0^1(\Omega) \rightarrow W_h$ 
for which 
\[ b( Q_h\bpsi,R_h v;\boeta_h) = b(\bpsi,v;\boeta_h),\quad 
\boeta_h \in \bM^1_h.\]
\end{lemma}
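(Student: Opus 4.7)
The plan is to exploit the fact that $b(\bpsi,v;\boeta) = \int_\Omega(\bpsi-\nabla v)\cdot\boeta\,d\bx$ is linear in $\bpsi$ and $v$ separately, so the desired identity decouples, for all $(\bpsi,v)$, into the two independent requirements
\[
\int_\Omega (\bpsi-\bQ_h\bpsi)\cdot\boeta_h\,d\bx = 0 \quad\text{and}\quad
\int_\Omega \nabla(v-R_h v)\cdot\boeta_h\,d\bx = 0,\qquad \boeta_h\in\bM_h^1.
\]
I would construct $\bQ_h$ and $R_h$ separately to realise these two conditions.

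For $\bQ_h$ I would use a componentwise mortar-type quasi-projection. Let $\pi_h\colon L^2(\Omega)\to S_h$ be defined by $\int_\Omega(\pi_h w-w)\mu_h\,d\bx=0$ for all $\mu_h\in M_h^1$. By Assumption \ref{A1A2}(i) the defining system is square, and the LBB-type estimate in Assumption \ref{A1A2}(ii) makes it non-singular and furnishes the $L^2$-stability $\|\pi_h w\|_{L^2(\Omega)}\le C\|w\|_{L^2(\Omega)}$. Setting $\bQ_h\bpsi=(\pi_h\psi_1,\pi_h\psi_2)$ gives a map into $\bV_h\subset\bH_0^1(\Omega)$ automatically, since $S_h\subset H_0^1(\Omega)$ already encodes the boundary modification. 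Using that $\pi_h$ fixes $S_h$, I would compare $\pi_h\psi$ with a Scott--Zhang interpolant $I_h\psi\in S_h$ through $\pi_h\psi-I_h\psi=\pi_h(\psi-I_h\psi)$, combine the $L^2$-stability of $\pi_h$ with the standard approximation estimate for $I_h$ and an inverse inequality on the quasi-uniform mesh, and thereby upgrade the $L^2$-stability to an $H^1$-stability $\|\bQ_h\bpsi\|_{\bH^1(\Omega)}\le C\|\bpsi\|_{\bH^1(\Omega)}$.

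For $R_h$ I would use bubble enrichment in $W_h=S_h\oplus B_h$. Write $R_h v = v_S + b_h$, with $v_S\in S_h$ a Scott--Zhang interpolant of $v$ preserving zero boundary values, and $b_h=\sum_{T\in\cT_h}\beta_T b_T\in B_h$ with element-wise coefficients determined by the zero-mean condition
\[\int_T b_h\,d\bx = \int_T(v-v_S)\,d\bx,\quad T\in\cT_h.\]
Since $\int_T b_T>0$, the $\beta_T$ are uniquely defined; standard scaling and the interpolation estimate yield $|\beta_T|\le C h_T^{-1}\|v-v_S\|_{L^2(T)}\le C|v|_{H^1(\omega_T)}$, and an inverse inequality then gives $\|b_h\|_{H^1(\Omega)}\le C\|v\|_{H^1(\Omega)}$, so that $R_h\colon H_0^1(\Omega)\to W_h$ is bounded.

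Finally, the commuting identity for $R_h$ follows from element-wise integration by parts:
\[\int_\Omega \nabla(v-R_h v)\cdot\boeta_h\,d\bx
= -\sum_T\int_T(v-R_h v)\,\nabla\cdot\boeta_h\,d\bx
+\sum_T\int_{\partial T}(v-R_h v)\,\boeta_h\cdot\bn\,ds.\]
The boundary contributions cancel across interior edges because both $v-R_h v$ and $\boeta_h$ are continuous, and they vanish on $\partial\Omega$ since $v,R_h v\in H_0^1(\Omega)$; the volume term vanishes because $\nabla\cdot\boeta_h$ is piecewise constant (as $\boeta_h$ is piecewise linear) while the bracket has zero mean on each $T$ by the construction of $b_h$. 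I expect the main obstacle to be the $H^1$-stability of $\bQ_h$: it is not immediate from Assumption \ref{A1A2} and needs the LBB condition (ii) and an inverse inequality to work together. The \emph{continuity} of the modified basis $\{\phi_i\}$ of $M_h^1$ is also decisive, because it is exactly what makes the interior edge terms cancel in the integration-by-parts step.
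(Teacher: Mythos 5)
Your proof is correct and follows essentially the same route as the paper: $\bQ_h$ is the componentwise mortar projection onto $S_h$ defined by orthogonality against $M_h^1$, $R_h$ is a bubble-corrected interpolant whose error has zero mean on each element, and the identity follows by integration by parts using the continuity of $\boeta_h$ and the piecewise constancy of $\nabla\cdot\boeta_h$. The only cosmetic difference is that the paper builds the $S_h$-part of $R_h$ from the $L^2$-orthogonal projection rather than a Scott--Zhang interpolant, and cites the literature for the $H^1$-stability of $\bQ_h$ that you spell out explicitly.
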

\begin{proof}
 We first define two operators 
 $Z_h : H_0^1(\Omega) \rightarrow B_h$ and 
$Q_h :H_0^1(\Omega) \rightarrow S_h$ as 
\[ \int_T (v- Z_h v) \, d\bx =0,\quad T \in \cT_h,\]
 and \[ \int_{\Omega} (v-Q_h v)\, \eta_h\,d\bx =0,\quad 
\eta_h \in M^1_h,\]
respectively.  The first operator $Z_h$ is well-defined as we can 
have a bubble function 
$b_T \in B_h$ with 
\[ \int_T b_T \,d\bx = \int_T v  \, d\bx.\]
The second operator $Q_h$ is well-defined and stable in 
$L^2$ and $H^1$ -norms 
due to Assumption 1(i)-(ii), see, e.g., \cite{BD98,KLP01}. Let $P_h$ be the $L^2$-orthogonal projection onto 
$S_h$. Now we define the operator  $R_h : H^1_0(\Omega) \rightarrow W_h$ 
with 
\begin{equation}\label{fortin}
 R_h v= P_h v + Z_h (v-P_hv).
\end{equation}
Note that $R_h$ is a bounded linear projector onto $W_h=S_h\oplus B_h$ 
with the property \cite{ABF84,AB93,Bra01}
\[ \int_T (R_hv -v)\,d\bx =0,\quad T \in \cT_h.\] 
Let $\bQ_h:\bH_0^1(\Omega) \rightarrow \bV_h$  be the vector 
version of $Q_h$. That means $\bQ_h\bu = (Q_hu_1,Q_hu_2)$ for 
$\bu =(u_1,u_2) \in \bH_0^1(\Omega)$.
Then 
\begin{eqnarray*}
b(\bQ_h\bpsi,R_hv;\boeta_h) & =&  
\int_{\Omega} \left(\bQ_h\bpsi - \nabla R_hv\right) \cdot 
\boeta_h \,d\bx \\ &=& \int_{\Omega} \bQ_h\bpsi\cdot \boeta_h\,d\bx 
- \int_{\Omega}   \nabla R_hv\cdot \boeta_h \,d\bx\\
&=& \int_{\Omega} \bpsi\cdot \boeta_h\,d\bx 
+ \int_{\Omega}   R_hv \,\nabla\cdot \boeta_h \,d\bx,
\end{eqnarray*}
where we use the divergence theorem and the fact that 
$\boeta_h$ is continuous. 
Since $\boeta_h$ is a continuous function 
and $\nabla\cdot\boeta_h$ is a piecewise constant with 
respect to the mesh $\cT_h$, we have 
\begin{eqnarray*}
 b(\bQ_h\bpsi,R_hv;\boeta_h) & =&   \int_{\Omega} \bpsi\cdot \boeta_h\,d\bx 
+ \int_{\Omega}   v \,\nabla\cdot \boeta_h \,d\bx \\
&=& \int_{\Omega} (\bpsi - \nabla v)\cdot \boeta_h\,d\bx = 
 b(\bpsi,v;\boeta_h).
\end{eqnarray*}
The boundedness of $R_h$ in $H^1$-norm can be shown as in 
\cite{ABF84}. 
\end{proof}

Thus we have the following theorem from 
the theory of saddle point problem \cite{BF91,Bra01}.

\begin{theorem}\label{th0}
There exists a constant $C$ independent of $t$ and $h$ such that 
\begin{eqnarray*}
 \|\bphi-\bphi_h\|_{H^1(\Omega)} + \|u - u_h \|_{H^1(\Omega)} 
+ \||\bzeta -\bzeta_h\||_t  \leq \\
C \left( \inf_{\bpsi_h \in \bV_h} 
\|\bphi-\bpsi_h\|_{H^1(\Omega)} + \inf_{v_h \in W_h} 
\|u-v_h\|_{H^1(\Omega)}  +  \inf_{\boeta_h \in \bM^1_h} 
 \||\bzeta -\boeta_h\||_t  \right),
\end{eqnarray*}
where the norm $\||\cdot\||_t$ is defined as 
\[  \||\bzeta\||_t = \|\bzeta\|_{H^{-1}(\Omega)} + \|\nabla\cdot\bzeta \|_{H^{-1}(\Omega)}  
+ t \|\bzeta \|_{L^2(\Omega}.\]
Moreover, if $\bphi \in \bH^2(\Omega)$, 
$u \in H^2(\Omega)$ and $\bzeta\in \bH^1(\Omega)$, 
the approximation properties of $\bV_h$, $W_h$ and 
$\bM^1_h$ imply that 
\[ \|\bphi-\bphi_h\|_{H^1(\Omega)} + \|u - u_h \|_{H^1(\Omega)} 
+ \||\bzeta -\bzeta_h\||_t  \leq \\
C h \left(\|\bphi\|_{H^2(\Omega)} + \|u\|_{H^2(\Omega)}  + \|\bzeta\|_{H^1(\Omega)}\right).\]
\end{theorem}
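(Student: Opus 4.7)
The plan is to deduce Theorem \ref{th0} from the general Brezzi theory for mixed problems applied to the discrete saddle point system \eqref{dsaddle}, using Lemma \ref{lemma0} as the Fortin projector. Concretely, I would cast the problem in the product space $\bV_h\times W_h$ equipped with the standard $H^1$-norm and the Lagrange multiplier in the mesh-dependent norm $\||\cdot\||_t$, and verify uniformly in $h$ and $t$ the three ingredients of the abstract theory: continuity of $a(\cdot;\cdot)$ and $b(\cdot;\cdot)$, ellipticity of $a$ on the discrete kernel of $b$, and the discrete inf--sup condition for $b$. Once these three conditions are established with constants independent of $t$ and $h$, the abstract quasi--best--approximation estimate of \cite{BF91,Bra01} gives the first inequality of the theorem directly.

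For the continuity of $b(\bpsi,v;\boeta)$ in the norm $\||\boeta\||_t$, I would integrate by parts and split $b(\bpsi,v;\boeta) = \int \bpsi\cdot\boeta + \int v\,\nabla\cdot\boeta$, bounding the two terms respectively by $\|\bpsi\|_{H^1}\|\boeta\|_{H^{-1}}$ and $\|v\|_{H^1}\|\nabla\cdot\boeta\|_{H^{-1}}$; continuity of $a$ follows from Cauchy--Schwarz and standard estimates, the $\lambda$-term being compatible because it reappears in the $t$-dependent penalty. Ellipticity on the discrete kernel $\{(\bpsi_h,v_h):\; b(\bpsi_h,v_h;\boeta_h)=0\,\forall \boeta_h\in\bM^1_h\}$ is obtained exactly as in \cite{AB93}: the presence of the bubbles in $W_h$ together with the $L^2$-orthogonality imposed on the kernel lets one control $\|\bpsi_h-\nabla v_h\|_{L^2}$, which, combined with Korn's inequality for the $\cC\bepsilon(\bpsi_h):\bepsilon(\bpsi_h)$ term, yields coercivity on the kernel. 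The decisive step is the discrete inf--sup: given $\boeta_h\in\bM^1_h$, I would invoke the continuous inf--sup in $\bH_0^1\times H_0^1$ and then apply Lemma \ref{lemma0} to $(\bpsi,v)\in\bH_0^1\times H_0^1$ realising the supremum, obtaining $(\bQ_h\bpsi,R_h v)\in\bV_h\times W_h$ with the same value of $b$ and a norm bound independent of $h$ and $t$; this is precisely the Fortin criterion.

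Having the quasi--best approximation inequality, the second (rate) part is routine. For $\bphi\in\bH^2$ and $u\in H^2$ the standard linear interpolation estimates on $\bV_h=[S_h]^2$ and on $W_h=S_h\oplus B_h$ yield $O(h)$ in the $H^1$-norm. For the multiplier term, I would bound $\inf_{\boeta_h}\||\bzeta-\boeta_h\||_t$ by picking $\boeta_h$ to be the best $L^2$-approximant from $\bM^1_h$ of $\bzeta\in\bH^1$, so that the $L^2$ error is $O(h)$ by Assumption \ref{A1A2}(iii); the $H^{-1}$ and $\|\nabla\cdot(\cdot)\|_{H^{-1}}$ parts are then controlled by duality against $H^1_0$-functions together with an integration by parts, giving again an $O(h)$ contribution uniformly in $t$.

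The main obstacle I anticipate is keeping all constants independent of the plate thickness $t$. This uniformity is what the mesh-dependent norm $\||\cdot\||_t$ is designed to deliver, but it forces one to be careful in two places: (i) the inf--sup constant coming from the Fortin construction in Lemma \ref{lemma0} must not degenerate as $t\to 0$, which is guaranteed because $\bQ_h$ and $R_h$ are defined purely from $L^2$-orthogonality conditions with no $t$-dependence; and (ii) the interpolation error for $\bzeta$ in the $\||\cdot\||_t$ norm has to absorb the $t\,\|\bzeta\|_{L^2}$ term, which is harmless since $\|\bzeta\|_{L^2}\le\|\bzeta\|_{H^1}$. Everything else is bookkeeping inside the Brezzi framework.
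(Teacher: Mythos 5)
Your proposal follows essentially the same route as the paper: the paper proves Theorem \ref{th0} by establishing the Fortin projectors of Lemma \ref{lemma0} and then invoking the abstract saddle point theory of \cite{BF91,Bra01} (with the kernel ellipticity and norm choices taken from \cite{AB93}), together with the approximation properties of $\bV_h$, $W_h$ and Assumption \ref{A1A2}(iii) for the $O(h)$ rate. You spell out the Brezzi ingredients (continuity in the $\||\cdot\||_t$ norm, kernel coercivity, discrete inf--sup via the Fortin criterion) in more detail than the paper, which simply cites the references, but the argument is the same.
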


\subsection{Dual Lagrange multiplier space $\bM_h^2$}
Another possibility of a discrete Lagrange multiplier space is 
to consider the  dual Lagrange multiplier space proposed 
for mortar finite elements in \cite{KLP01,BWHabil}. Interestingly, 
the boundary modification can be exactly done as in the case 
of standard Lagrange multiplier space. We start with the 
basis for the dual Lagrange multiplier space $\tilde{M}_h$
including the degree of freedom on the boundary of $\Omega$. 
Let $\{\tilde{\mu}_1,\tilde{\mu}_2,\cdots,\tilde{\mu}_m,\tilde{\mu}_{m+1},\cdots, \tilde{\mu}_n\}$ 
be the basis for $\tilde{M}_h$, which is biorthogonal to the basis 
 $\{\varphi_1,\varphi_2,\cdots,\varphi_m,\varphi_{m+1},\cdots, \varphi_n\}$ 
of $K_h$ so that these basis functions satisfy 
the  biorthogonality relation
\begin{eqnarray} \label{biorth}
  \int_{\Omega} \tilde{\mu}_i \ \varphi_j \, d\bx = c_j \delta_{ij},
\; c_j\neq 0,\; 1\le i,j \le n,
\end{eqnarray}
 where $n := \dim \tilde{M}_h = \dim K_h$,  $\delta_{ij}$ is 
 the Kronecker symbol, and $c_j$ a scaling factor. 
In fact, we can construct local basis functions 
for $\tilde{M}_h$ on the reference triangle $\hat T$ 
so that the global basis functions for $\tilde{M}_h$
 are constructed by  gluing these local basis functions together. 
This means that we can use a standard assembling routine for the functions 
in  $\tilde{M}_h$, and the basis functions of $\tilde M_h$ are also 
associated with the finite element vertices as the basis functions of $K_h$. 
For the reference triangle $\hat T:=\{(x,y) :\, 0\leq x,0\leq y,x+y\leq 1\}$, we have 
\begin{eqnarray*}
  \hat \mu_1:=3-4x-4y,\,
  \hat\mu_2:=4x-1,\;\text{and}\;
  \hat\mu_3:=4y-1.
\end{eqnarray*}

The finite element basis functions $\{\mu_1,\mu_2,\cdots,\mu_m\}$ 
for $M^2_h$ are defined as 
\[ \mu_i = \begin{cases} \tilde{\mu}_i, & i \in \cN_0\backslash \cI\\
 \tilde{\mu}_i + \sum_{j \in \partial \cN \cap \cS_i}A_{j,i} \tilde{\mu}_j,\, 
A_{j,i}\geq 0,\,& i \in \cI
\end{cases}.
\]
 
If the coefficients $A_{i,j}$ are  chosen to satisfy 
\[ \sum_{j \in \cS_i} A_{i,j} =1,\quad i \in \cI,\]
Assumptions 1(ii) and 1(iii) are also satisfied 
exactly as in the case of the standard Lagrange multiplier space.
The proof of Assumption 1(ii) is much easier than in \cite{BD98} 
due to the biorthogonality relation. 
In fact, if we set $\phi_h=\sum_{k=1}^{n} a_k \phi_k \in S_h$ and 
$\mu_h=\sum_{k=1}^{n} a_k \mu_k \in M^2_h$, 
the biorthogonality relation \eqref{biorth}
and the quasi-uniformity assumption imply that 
\begin{eqnarray*} 
\int_{\Omega} \varphi_h \mu_h \,d\bx =\sum_{i,j=1}^{n} a_i a_j\int_{\Omega}\varphi_i\,\mu_j\,d\bx
=\sum_{i=1}^{n} a_i^2 c_i 
\geq C \sum_{i=1}^{n} a_i^2  h_{i}^2\geq C\|\varphi_h\|^2_{L^2(\Omega)},
 \end{eqnarray*}
where $h_i$ denotes the mesh-size at $i$the vertex.
Taking into account the fact that $\|\varphi_h\|^2_{L^2(\Omega)} \equiv 
\|\mu_h\|^2_{L^2(\Omega)} \equiv \sum_{i=1}^{n} a_i^2  h_{i}^d$,
we find that Assumption \ref{A1A2}(ii) is satisfied.
Since the sum of the local basis functions of $M^2_h$ is one, Assumption 
\ref{A1A2}(iii) can be proved as in \cite{KLP01}.  
The vector Lagrange multiplier space 
is defined as before $\bM_h^2 = [M_h^2]^2.$
Although the condition $\dim M^2_h = \dim S_h$ is satisfied as 
before, the basis functions for $M^2_h$ are not continuous, and 
we cannot show the existence of a bounded linear operator 
$R_h$ as in Lemma \ref{lemma0}. We need to use an 
alternative method.  As before we need to prove the following theorem to 
show the well-posedness of the discrete problem: 
\begin{theorem}\label{th1}
There exist two bounded linear projectors 
$\bQ_h :\bH_0^1(\Omega) \rightarrow \bV_h$ 
and 
$R_h : H_0^1(\Omega) \rightarrow W_h$ 
for which 
\[ b( \bQ_h\bpsi,R_h v;\boeta_h) = b(\bpsi,v;\boeta_h),\quad 
\boeta_h \in \bM^2_h.\]
\end{theorem}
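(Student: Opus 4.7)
The operator $\bQ_h$ is constructed first. Let $Q_h : H_0^1(\Omega) \to S_h$ be the biorthogonal quasi-projection, defined by $\int_\Omega (v - Q_h v) \mu_h \, d\bx = 0$ for every $\mu_h \in M^2_h$. The biorthogonality relation \eqref{biorth}, together with the boundary modification of the basis, yields a diagonal (or near-diagonal) mass matrix, so $Q_h$ is well-defined and $L^2$-stable; $H^1$-stability follows from Assumption \ref{A1A2}(ii) in the same way as for $M^1_h$, see \cite{KLP01}. I would then set $\bQ_h \bu = (Q_h u_1, Q_h u_2)$. Consequently $\int_\Omega (\bQ_h \bpsi - \bpsi) \cdot \boeta_h \, d\bx = 0$ for all $\boeta_h \in \bM^2_h$, so it remains only to arrange $\int_\Omega \nabla(R_h v - v) \cdot \boeta_h \, d\bx = 0$.

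Mimicking the construction in Lemma \ref{lemma0}, I would first try $R_h v := Q_h v + Z_h(v - Q_h v)$, with $Z_h : H_0^1(\Omega) \to B_h$ the bubble projection preserving element averages, so that $\int_T (R_h v - v) \, d\bx = 0$ for every $T \in \cT_h$. Element-wise integration by parts then gives
\[
\int_\Omega \nabla(R_h v - v) \cdot \boeta_h \, d\bx = -\sum_{T \in \cT_h} \int_T (R_h v - v) \, \nabla \cdot \boeta_h \, d\bx + \sum_{e} \int_e (R_h v - v) \, [\boeta_h \cdot \bn]_e \, ds,
\]
where the second sum runs over the interior edges of $\cT_h$. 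The volume sum vanishes because $\nabla \cdot \boeta_h|_T$ is constant and $R_h$ preserves element averages, and boundary edges drop out because both $v$ and $R_h v$ vanish on $\partial\Omega$. The real obstacle is the remaining interior-edge jump sum: since $\boeta_h \in \bM^2_h$ is discontinuous, contributions from the two sides of each edge no longer cancel, and because bubbles vanish on edges, this term reduces to $\sum_e \int_e (Q_h v - v) \, [\boeta_h \cdot \bn]_e \, ds$. This is exactly where the proof of Lemma \ref{lemma0}, which relied on continuity of $\boeta_h$, breaks down.

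To kill the edge term, I would modify the $S_h$-component of $R_h v$. A natural first attempt is to replace $Q_h$ by a Scott--Zhang-type projector whose defining edge integrals are chosen in dual pairing with the admissible jumps $[\mu_i \cdot \bn]$, so that the trace of $R_h v - v$ is $L^2$-orthogonal on every interior edge to the finite-dimensional space of $\bM^2_h$-jumps. Choosing the reference edge for each interior vertex consistently across the mesh, and verifying $H^1$-stability of the resulting operator, is the delicate step. An alternative, and perhaps cleaner, plan is to bypass the explicit Fortin operator altogether: by Fortin's abstract lemma, the existence of $\bQ_h$ and $R_h$ with the claimed commutation is equivalent to a discrete inf--sup condition for $b(\cdot,\cdot;\cdot)$ on $(\bV_h \times W_h) \times \bM^2_h$, which one can prove directly by exploiting the biorthogonality of $M^2_h$ with the nodal basis of $K_h$ to construct a good test pair $(\bpsi_h, v_h)$ for any $\boeta_h \in \bM^2_h$. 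Either way, the central difficulty is control of the discontinuous test functions on element edges; the volume and vertex-based estimates are inherited essentially for free from the dual-basis construction.
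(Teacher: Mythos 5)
Your construction of $\bQ_h$ and your diagnosis of where the argument of Lemma~\ref{lemma0} breaks down are both correct: the element-wise integration by parts leaves an interior-edge jump term $\sum_e \int_e (Q_h v - v)\,[\boeta_h\cdot\bn]_e\,ds$ that does not vanish for discontinuous $\boeta_h$, and the bubble correction cannot help on edges. But at that point your proof stops being a proof. Neither of your two proposed repairs is carried out: the Scott--Zhang-type operator orthogonalized against the $\bM^2_h$-jumps is only described as a ``delicate step'' (and it is genuinely unclear that such an operator can be made a bounded projector, since the jump spaces on the edges meeting at a vertex impose more constraints than a nodal-value-based operator has freedom to satisfy), and the ``prove the inf-sup directly by a good test pair'' alternative is asserted without any construction of that test pair. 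The existence of $R_h$ \emph{is} the content of the theorem, so what you have is a correct reduction of the problem, not a solution of it.

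The paper closes this gap by a different route that avoids edge terms entirely: it does not integrate by parts at all, but instead invokes a Stokes-type inf-sup condition (Lemma~\ref{lemma1}, imported from \cite{Lam08}) between $\bW_h=[W_h]^2$ and the mean-zero dual multiplier space $\tilde M_h^0$. Combining this with the continuous inf-sup condition for $\bH_0^1(\Omega)\times L_0^2(\Omega)$, the standard Fortin argument yields a bounded projector $\bPi_h:\bH_0^1(\Omega)\to\bW_h$ with $\int_\Omega \nabla\cdot\bPi_h\bu\,\mu_h\,d\bx=\int_\Omega\nabla\cdot\bu\,\mu_h\,d\bx$ for $\mu_h\in\tilde M_h^0$, and the zero boundary values of both $\bu$ and $\bPi_h\bu$ extend this identity to all of $\tilde M_h$ (the constant mode integrates the divergence to zero on both sides). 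Testing with $\bu=(v,0)^T$ and $\bu=(0,v)^T$ then shows that the scalar component $\Pi_h$ commutes with each partial derivative against $M^2_h\subset\tilde M_h$, and adding the two identities gives exactly the property required of $R_h$. If you want to complete your proof, you should either cite such a divergence inf-sup result for the $P_1$-plus-bubble / dual-multiplier pair or prove it; without that ingredient the edge-jump obstruction you correctly identified remains unresolved.
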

We define the  projector $Q_h :H_0^1(\Omega) \rightarrow S_h$ as 
\[  \int_{\Omega} (v-Q_h v)\, \eta_h\,d\bx =0,\quad 
\eta_h \in M^2_h.\] 
Here $Q_h$ is well-defined and bounded in $L^2$ and 
$H^1$- norms  due 
to Assumption 1(i)-(ii), see \cite{Lam06}.  Our task is now to 
show the existence of the operator 
$R_h : H^1_0(\Omega) \rightarrow W_h$ satisfying 
\[ \int_{\Omega} \nabla R_h v\cdot \boeta_h\, d\bx  = 
\int_{\Omega} \nabla v\cdot \boeta_h\, d\bx,\;\boeta_h \in \bM^2_h.\]
In order to show the existence of this operator, we 
use the following result proved in \cite{Lam08}: 
\begin{lemma}\label{lemma1}
Let $\bW_h = [W_h]^2$, and 
\[ \tilde M_h^0 =  \left\{\mu_h = \sum_{i=1}^n a_i \tilde{\mu}_i, \, 
\int_{\Omega} \mu_h \,d\bx =0.\right\}\]
Then there exists a constant $\tilde \beta>0$ independent of mesh-size $h$ such that 
\begin{eqnarray}\label{is1} 
\sup_{\bu_h \in \bW_h}
\frac{\int_{\Omega}\nabla \cdot \bu_h \mu_h\,d\bx}{\|u_h\|_{H^1(\Omega)} } 
\geq\tilde \beta \|\mu_h\|_{L^2(\Omega)}\, ,\quad \mu_h \in \tilde M^0_h\, . 
\end{eqnarray}
\end{lemma}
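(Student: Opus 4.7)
The plan is to establish the inf-sup condition (\ref{is1}) by Fortin's criterion. Since $\tilde{M}_h^0$ lies in $L^2_0(\Omega)$ by definition, the continuous divergence inf-sup on $\bH^1_0(\Omega) \times L^2_0(\Omega)$ delivers, for every $\mu_h \in \tilde{M}_h^0$, a function $\bv \in \bH^1_0(\Omega)$ with $\nabla \cdot \bv = \mu_h$ and $\|\bv\|_{H^1(\Omega)} \leq C \|\mu_h\|_{L^2(\Omega)}$. Hence it is enough to exhibit a Fortin operator $\Pi_h : \bH^1_0(\Omega) \to \bW_h$ that is bounded in the $H^1$-norm and reproduces the divergence moments
\[
\int_{\Omega} \nabla \cdot \Pi_h \bv \, \mu_h \, d\bx \;=\; \int_{\Omega} \nabla \cdot \bv \, \mu_h \, d\bx, \qquad \mu_h \in \tilde{M}_h^0,
\]
for then $\bu_h := \Pi_h \bv$ satisfies $\int_\Omega \nabla \cdot \bu_h \, \mu_h \, d\bx = \|\mu_h\|_{L^2(\Omega)}^2$ together with $\|\bu_h\|_{H^1(\Omega)} \leq C \|\mu_h\|_{L^2(\Omega)}$, which yields (\ref{is1}) with $\tilde\beta = 1/C$.

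\textbf{Construction of $\Pi_h$.} I would build $\Pi_h$ in the MINI-element style, as a Cl\'ement or Scott--Zhang interpolation $\bI_h : \bH^1_0(\Omega) \to \bV_h = [S_h]^2$ plus an element-wise bubble correction,
\[
\Pi_h \bv := \bI_h \bv + \sum_{T \in \cT_h} \bc_T \, b_T, \qquad \bc_T \in \bR^2,
\]
with the vectors $\bc_T$ fixed so as to enforce the required moment condition. The guiding identity is $\int_{\Omega} \tilde{\mu}_i \, d\bx = \int_{\Omega} \varphi_i \, d\bx$, which is a direct consequence of the reference-element computation $\int_{\hat T} \hat{\mu}_j \, d\bx = 1/6$ (the same value produced by the barycentric coordinates on $\hat T$). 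This identity turns the coefficient identification $\sum_i a_i \tilde{\mu}_i \mapsto \sum_i a_i \varphi_i$ into a bijection between $\tilde{M}_h^0$ and $K_h^0 := K_h \cap L^2_0(\Omega)$, and the biorthogonality relation (\ref{biorth}) combined with quasi-uniformity renders this bijection $L^2$-stable with constants independent of $h$. The Fortin moment condition against $\tilde{M}_h^0$ can therefore be recast as a moment condition against $K_h^0$, which is then fixed by the classical MINI-element element-mean prescription; $H^1$-boundedness of the bubble term follows from $L^2$-approximation bounds on $\bv - \bI_h \bv$, the scalings $\|b_T\|_{L^2(T)} \asymp h_T$ and $|b_T|_{H^1(T)} \asymp 1$, and standard inverse estimates.

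\textbf{Main obstacle.} The chief technical difficulty is a local dimension mismatch: on any triangle $T$ the three traces $\tilde{\mu}_i|_T$ span all of $\cP_1(T)$, thereby imposing three local moment conditions, whereas the bubble correction offers only two degrees of freedom per element. This is precisely why the zero-mean constraint defining $\tilde{M}_h^0$ cannot be dropped---it eliminates the single global degree of freedom that would otherwise overdetermine the system. Showing that the transfer from the continuous $L^2_0(\Omega)$-setting to the discrete $\tilde{M}_h^0$-setting costs only an $h$-independent constant is the substantive work, and it is the biorthogonality relation (\ref{biorth}) that makes this quantification transparent, since it diagonalises the relevant mass couplings and delivers the equivalence $\|\mu_h\|_{L^2(\Omega)}^2 \asymp \sum_i a_i^2 h_i^2$ that feeds directly into the stability estimate.
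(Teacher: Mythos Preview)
The paper does not supply its own proof of this lemma; it is quoted verbatim from the reference \cite{Lam08} (``we use the following result proved in \cite{Lam08}''). So there is nothing in the present paper to compare your argument against directly, and your proposal must be judged on its own merits.

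Your overall Fortin strategy is sound and your identification of the main obstacle (three local moment conditions versus two bubble degrees of freedom per element) is accurate. However, the resolution you propose contains a genuine gap at the ``recasting'' step. You observe that the coefficient map $\sum_i a_i\tilde\mu_i \mapsto \sum_i a_i\varphi_i$ is an $L^2$-stable bijection from $\tilde M_h^0$ onto $K_h^0$, and you then assert that the Fortin moment condition against $\tilde M_h^0$ can therefore be replaced by the moment condition against $K_h^0$. This inference is not valid: a stable bijection between two test spaces says nothing about the equality of the linear functionals $\mu_h\mapsto\int_\Omega \nabla\!\cdot\!(\Pi_h\bv-\bv)\,\mu_h\,d\bx$ and $q_h\mapsto\int_\Omega \nabla\!\cdot\!(\Pi_h\bv-\bv)\,q_h\,d\bx$. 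Concretely, the classical MINI Fortin operator enforces $\int_T(\Pi_h\bv-\bv)\,d\bx=0$; for a \emph{continuous} pressure $q_h\in K_h$ one then integrates by parts globally and uses that $\nabla q_h$ is piecewise constant to obtain $\int_\Omega\nabla\!\cdot\!(\Pi_h\bv-\bv)\,q_h\,d\bx=0$. For $\mu_h\in\tilde M_h$, which is \emph{discontinuous} across element edges, the same element-wise integration by parts leaves uncontrolled edge terms $\sum_e\int_e (\Pi_h\bv-\bv)\cdot\bn\,[\mu_h]\,ds$ that do not cancel. Thus the MINI element-mean prescription does \emph{not} deliver the moment condition you need, and the bijection with $K_h^0$ does not repair this.

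A correct argument has to confront the discontinuity of $\tilde M_h$ head-on: either by choosing the bubble coefficients to control the full set of local $P_1$-moments of the divergence (which, as you note, is one condition too many per element and hence forces a genuinely global coupling that the zero-mean constraint alone does not obviously resolve), or by a different mechanism such as a macroelement argument. Your sketch does not indicate how this is to be done, so as written the proof is incomplete.
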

Let \[L^2_0(\Omega) =\left\{ u \in L^2(\Omega): \, 
\int_{\Omega} u \, d\bx =0\right\}.\] 
Using Lemma \ref{lemma1} and the fact that 
the two spaces $\bH^1_0(\Omega)$ and $L^2_0(\Omega)$ satisfy the 
inf-sup condition 
\[ \sup_{\bu \in \bH^1_0(\Omega)}
\frac{\int_{\Omega}\nabla \cdot \bu \, \mu\,d\bx}{\|u\|_{H^1(\Omega)}, } 
\geq \beta \|\mu\|_{L^2(\Omega)} \]
we can show the existence of 
a bounded linear projector  $\bPi_h : \bH^1_0(\Omega) \rightarrow  \bW_h$ 
as in \cite{BF91,Bra01} such that  \[\int_{\Omega} \nabla \cdot \bPi_h \bu \,\mu_h \, d\bx = 
\int_{\Omega} \nabla \cdot  \bu\, \mu_h \, d\bx,\quad \mu_h \in  \tilde M^0_h.\]

Let $\Pi_h: H^1_0(\Omega) \rightarrow W_h$ be the scalar 
version of $\bPi_h$ meaning that  $\bPi_h \bu = (\Pi_h u_1,\Pi_h u_2)$ for 
the vector $\bu =(u_1,u_2)  \in  \bH^1_0(\Omega)$. 
Since $\bPi_h$ is bounded in $H^1$-norm, $\Pi_h$ is also 
bounded in $H^1$-norm. Since $\Pi_h \bu $ and $\bu$ both satisfy 
 homogeneous boundary condition, 
we even have 
 \[\int_{\Omega} \nabla \cdot \bPi_h \bu \,\mu_h \, d\bx = 
\int_{\Omega} \nabla \cdot  \bu\, \mu_h \, d\bx,\quad \mu_h \in  \tilde M_h.\]
If we use the function $\bu = (v,0)^T\in \bH^1_0(\Omega)$ in the above 
equation, we get 
\[ \int_{\Omega}
\left( \frac{\partial \Pi_h v}{\partial x} -\frac{\partial v}{\partial x}\right)\, \mu_h \, d\bx = 0,\quad \mu_h \in \tilde M_h,\]
and similarly for $ v \in H^1_0(\Omega)$, we have 
 \[ \int_{\Omega}
\left( \frac{\partial \Pi_h v}{\partial y} -\frac{\partial v}{\partial y}\right)\, \mu_h \, d\bx = 0,\quad \mu_h \in \tilde M_h.\]

Since $M_h^2 \subset \tilde M_h$, we have the following result. 
 \begin{lemma}\label{lemma2}
There exists a bounded linear projector $\Pi_h : H_0^1(\Omega) 
\rightarrow W_h$ such that for  $ v \in H^1_0(\Omega)$, we have 
 \[ \int_{\Omega}
\left( \frac{\partial \Pi_h v}{\partial x} -\frac{\partial v}{\partial x}\right)\, \mu_h \, d\bx = 0,\quad \mu_h \in M^2_h,\]
and 
 \[ \int_{\Omega}
\left( \frac{\partial \Pi_h v}{\partial y} -\frac{\partial v}{\partial y}\right)\, \mu_h \, d\bx = 0,\quad \mu_h \in  M^2_h.\]
 \end{lemma}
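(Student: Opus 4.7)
The plan is to package Lemma \ref{lemma2} as a direct corollary of the construction that has just been laid out in the preceding paragraphs, rather than to repeat an independent argument. The scalar projector $\Pi_h$ is already defined componentwise from the vector projector $\bPi_h:\bH^1_0(\Omega)\to\bW_h$, and the only thing really left to verify is that the two displayed identities, which have just been recorded for $\mu_h\in\tilde M_h$, continue to hold when the test space is restricted to $M^2_h$.

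First I would recall the construction: Lemma \ref{lemma1} supplies the discrete inf-sup condition for the pair $(\bW_h,\tilde M^0_h)$, while the classical Stokes inf-sup holds for $(\bH^1_0(\Omega),L^2_0(\Omega))$. The standard Fortin-lift construction from \cite{BF91,Bra01} then yields a bounded linear projector $\bPi_h:\bH^1_0(\Omega)\to\bW_h$ preserving the divergence pairing against $\tilde M^0_h$, from which $\Pi_h:H^1_0(\Omega)\to W_h$ inherits boundedness in the $H^1$-norm componentwise.

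Next I would promote the divergence identity from $\tilde M^0_h$ to all of $\tilde M_h$. Writing any $\mu_h\in\tilde M_h$ as $\mu_h=\mu_h^0+c$ with $\mu_h^0\in\tilde M^0_h$ and $c=|\Omega|^{-1}\int_\Omega\mu_h\,d\bx$ a constant, the piece $\mu_h^0$ is handled by the Fortin identity. For the constant $c$, I observe that both $\bu\in\bH^1_0(\Omega)$ and $\bPi_h\bu\in\bW_h$ vanish on $\partial\Omega$, so the divergence theorem gives $\int_\Omega\nabla\cdot(\bPi_h\bu-\bu)\,d\bx=0$, killing the constant contribution. Plugging $\bu=(v,0)^T$ and $\bu=(0,v)^T$ into the resulting identity peels off the $x$-derivative and $y$-derivative statements against $\tilde M_h$, exactly as displayed just before the lemma.

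Finally, the reduction from $\tilde M_h$ to $M^2_h$ is a matter of inspecting the basis: by definition each $\mu_i\in M^2_h$ is a finite linear combination of the biorthogonal basis functions $\tilde\mu_i,\tilde\mu_j$, so $M^2_h\subset\tilde M_h$ and the two identities descend immediately. There is no genuine obstacle here; the substantive work sits upstream in Lemma \ref{lemma1} (quoted from \cite{Lam08}) and in the Fortin lifting. The only care needed is to make the $\tilde M_h=\tilde M^0_h\oplus\mathbb{R}$ decomposition explicit so that the homogeneous boundary condition is used to absorb the mean-value constant, since without the clamped trace one could only conclude the identity on $\tilde M^0_h$.
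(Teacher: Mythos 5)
Your proposal is correct and follows essentially the same route as the paper: Fortin lift via Lemma \ref{lemma1} and the continuous inf--sup condition, extension from $\tilde M^0_h$ to $\tilde M_h$ using the homogeneous boundary condition, specialization to $\bu=(v,0)^T$ and $(0,v)^T$, and restriction to $M^2_h\subset\tilde M_h$. The only difference is that you spell out the decomposition $\tilde M_h=\tilde M^0_h\oplus\mathrm{span}\{1\}$ and the divergence-theorem cancellation of the constant mode, a step the paper compresses into the single remark that $\bPi_h\bu$ and $\bu$ both vanish on the boundary.
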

  
\begin{theorem}\label{th2}
The interpolation  operator $\Pi_h$ defined in Lemma \ref{lemma2}   satisfies
\[ \int_{\Omega} \nabla \Pi_h v\cdot \boeta_h\, d\bx  = 
\int_{\Omega} \nabla v\cdot \boeta_h\, d\bx,\;\boeta_h \in \bM^2_h.\]
\end{theorem}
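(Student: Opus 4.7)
The statement is essentially a componentwise repackaging of Lemma \ref{lemma2}, so my plan is simply to exploit the product structure of the vector Lagrange multiplier space $\bM_h^2 = [M_h^2]^2$. Given an arbitrary $\boeta_h \in \bM_h^2$, I would write $\boeta_h = (\eta_h^1,\eta_h^2)$ with each $\eta_h^i \in M_h^2$, and then expand the gradient inner product into its two scalar pieces:
\[
\int_{\Omega} \nabla \Pi_h v \cdot \boeta_h \, d\bx
= \int_{\Omega} \frac{\partial \Pi_h v}{\partial x}\, \eta_h^1 \, d\bx
+ \int_{\Omega} \frac{\partial \Pi_h v}{\partial y}\, \eta_h^2 \, d\bx.
\]

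Next I would apply the two scalar identities of Lemma \ref{lemma2} separately: the first identity (with $\mu_h = \eta_h^1$) rewrites the $x$-derivative term with $v$ in place of $\Pi_h v$, and the second identity (with $\mu_h = \eta_h^2$) does the same for the $y$-derivative term. Reassembling the two pieces then yields $\int_{\Omega} \nabla v \cdot \boeta_h \, d\bx$, which is the desired equality. The existence of $\Pi_h$ and the fact that it is a bounded linear projector are inherited directly from Lemma \ref{lemma2}, so no additional stability argument is needed at this level.

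There is really no substantive obstacle in this theorem itself; all of the genuine work has already been done in building $\Pi_h$ via the vector projector $\bPi_h$ and reducing its inf-sup-based definition (on $\tilde M_h^0$) to the full unconstrained space $\tilde M_h$ using the homogeneous boundary condition on $\bu = (v,0)^T$ and $\bu = (0,v)^T$. Since $M_h^2 \subset \tilde M_h$, the scalar commuting diagram properties in Lemma \ref{lemma2} hold in particular for test functions in $M_h^2$, and the theorem follows by a single line of componentwise bookkeeping.
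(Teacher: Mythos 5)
Your proof is correct and follows essentially the same route as the paper: decompose $\boeta_h=(\eta_h^1,\eta_h^2)\in[\,M_h^2]^2$, expand the dot product into the $x$- and $y$-derivative integrals, apply the two scalar identities of Lemma \ref{lemma2} to each term, and add. No differences worth noting.
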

\begin{proof}
Let  $\boeta_h = (\mu_h,\eta_h)^T \in \bM^2_h$, where 
$\mu_h, \eta_h \in M_h^2$. 
Then we need to satisfy 
\[ \int_{\Omega} \frac{\partial \Pi_h v}{\partial x} \, \mu_h\, d\bx + 
\int_{\Omega} \frac{\partial \Pi_h v}{\partial y} \, \eta_h\, d\bx 
  = \int_{\Omega} \frac{\partial v}{\partial x} \, \mu_h\, d\bx + 
\int_{\Omega} \frac{\partial  v}{\partial y} \, \eta_h\, d\bx ,\]
which results in adding two equations of Lemma \ref{lemma2}. 
\end{proof}
Thus Theorem \ref{th1} is proved with $R_h$ replaced by 
$\Pi_h$ . Hence we get the same approximation as in Theorem \ref{th0}.
\begin{theorem}\label{th3}
There exists a constant $c$ independent of $t$ and $h$ such that 
\begin{eqnarray*}
 \|\bphi-\bphi_h\|_{H^1(\Omega)} + \|u - u_h \|_{H^1(\Omega)} 
+ \||\bzeta -\bzeta_h\||_t  \leq \\
c \left( \inf_{\bpsi_h \in \bV_h} 
\|\bphi-\bpsi_h\|_{H^1(\Omega)} + \inf_{v_h \in W_h} 
\|u-v_h\|_{H^1(\Omega)}  +  \inf_{\boeta_h \in \bM^2_h} 
 \||\bzeta -\boeta_h\||_t  \right).
\end{eqnarray*}
Moreover, if $\bphi \in \bH^2(\Omega)$, 
$u \in H^2(\Omega)$ and $\bzeta\in \bH^1(\Omega)$, 
the approximation properties of $\bV_h$, $W_h$ and 
$\bM^2_h$ imply that 
\[ \|\bphi-\bphi_h\|_{H^1(\Omega)} + \|u - u_h \|_{H^1(\Omega)} 
+ \||\bzeta -\bzeta_h\||_t  \leq \\
C h \left(\|\bphi\|_{H^2(\Omega)} + \|u\|_{H^2(\Omega)}   + \|\bzeta\|_{H^1(\Omega)}\right).\]

\end{theorem}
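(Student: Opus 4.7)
The plan is to follow the same abstract saddle-point strategy that established Theorem \ref{th0}, now fed with the Fortin-type operators constructed for $\bM_h^2$. What made Theorem \ref{th0} work was Lemma \ref{lemma0}: a pair of bounded linear projectors $\bQ_h$ and $R_h$ that commute with the bilinear form $b(\cdot,\cdot;\boeta_h)$ when tested against the multiplier space. Theorem \ref{th1}, together with Theorem \ref{th2}, supplies exactly the same structural ingredient for the dual space $\bM_h^2$, with the operator $R_h$ replaced by $\Pi_h$. Thus the abstract machinery is ready and the remainder is essentially bookkeeping.

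First I would verify, one by one, the hypotheses of the Brezzi-Fortin theory for the parameter-dependent saddle point problem as formulated in \cite{BF91,Bra01,AB93}. The ellipticity of $a(\cdot,\cdot;\cdot,\cdot)$ on the discrete kernel follows uniformly in $t$ from Korn's inequality together with the control on $\|\bpsi_h - \nabla v_h\|_{L^2(\Omega)}$ that the kernel itself provides; this step is identical to the one used in \cite{AB93} and in the proof of Theorem \ref{th0}, and in particular it relies on the bubble enrichment $W_h = S_h \oplus B_h$, not on any property of $M_h$. Next, for the discrete inf-sup on $b$ against $\bM_h^2$, I would take the pair $(\bpsi,v) \in \bH_0^1(\Omega)\times H_0^1(\Omega)$ realizing the continuous inf-sup for a given $\boeta_h \in \bM_h^2$, then replace it by $(\bQ_h\bpsi,\Pi_h v)$; the commuting identity from Theorem \ref{th1} leaves the numerator unchanged, while the $H^1$-boundedness of $\bQ_h$ (from Assumption \ref{A1A2}(i)--(ii)) and of $\Pi_h$ (from Theorem \ref{th2}) controls the denominator, producing a constant independent of $h$ and of $t$.

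With ellipticity and inf-sup in hand, the abstract theory delivers the first displayed estimate of Theorem \ref{th3} with the mesh-dependent norm $|||\cdot|||_t$, the constant depending on neither $t$ nor $h$ because, exactly as in \cite{AB93}, the norm is tuned so that all stability constants remain uniform. The second estimate is then a routine consequence of approximation theory: standard linear interpolation on $\bV_h = [S_h]^2$ and on $W_h = S_h \oplus B_h$ yields the $O(h)$ bound in terms of $\|\bphi\|_{H^2(\Omega)}$ and $\|u\|_{H^2(\Omega)}$, while componentwise application of Assumption \ref{A1A2}(iii) for $M_h^2$ gives $\inf_{\boeta_h \in \bM_h^2}\|\bzeta - \boeta_h\|_{L^2(\Omega)} \leq C h |\bzeta|_{H^1(\Omega)}$, which in turn dominates $\inf_{\boeta_h} |||\bzeta - \boeta_h|||_t$ since $t\leq 1$ and $\|\cdot\|_{H^{-1}(\Omega)}\leq \|\cdot\|_{L^2(\Omega)}$.

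The main difficulty was not in the present theorem but upstream, in securing the Fortin operator $\Pi_h$ for the discontinuous dual multiplier: the divergence-theorem computation that drove the proof of Lemma \ref{lemma0} for the continuous space $\bM_h^1$ is unavailable here, and had to be replaced by Lemma \ref{lemma1} and the vector Fortin construction in Theorem \ref{th2}. Once those are accepted, the proof of Theorem \ref{th3} is essentially the one already written for Theorem \ref{th0}, and the only point worth a careful second look is the $t$-uniformity of the ellipticity constant on the kernel, which, as noted, is inherited verbatim from \cite{AB93} thanks to the bubble enrichment of $W_h$.
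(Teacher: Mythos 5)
Your proposal is correct and follows essentially the same route as the paper: once Assumption \ref{A1A2}(iv) is secured for $\bM_h^2$ via Theorems \ref{th1} and \ref{th2}, the estimate is obtained from the abstract saddle-point theory of \cite{BF91,Bra01,AB93} exactly as in Theorem \ref{th0}, and the $O(h)$ bound follows from the interpolation properties of $\bV_h$, $W_h$ and Assumption \ref{A1A2}(iii). The paper itself gives no further detail beyond this reduction, so your spelled-out verification of kernel ellipticity, the Fortin-based discrete inf-sup, and the domination of $\||\cdot\||_t$ by the $L^2$-norm is simply a more explicit rendering of the same argument (the only quibble being that the $H^1$-boundedness of $\Pi_h$ comes from its construction via Lemma \ref{lemma1}, not from Theorem \ref{th2}).
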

\subsection{Algebraic formulation}    
We want to write the algebraic system of the 
discrete formulation \eqref{dsaddle}.
In the following, we use the same notation for the vector representation of 
the solutions and the solutions as elements in $\bV_h$, $W_h$ and 
$\bM_h$. The algebraic formulation of the saddle point problem 
\eqref{dsaddle} can be written as
\begin{equation} \label{saddlealg}
\left[\begin{array}{cccc} 
 \tA & \lambda \tB^T & \tD \\
\lambda \tB & \tK & \tB \\
\tD& \tB^T & \tR 
 \end{array} \right]
\left[\begin{array}{ccc} \bphi_h \\ u_h \\ \bzeta_h 
\end{array}\right]=
\left[\begin{array}{ccc} 0\\l_h\\0 \end{array}\right],
\end{equation}
where $\tA$, $\tB$, $\tK$, $\tD$ and $\tR$ are suitable matrices 
arising from the discretization of different bilinear forms, and 
$l_h$ is the vector form of discretization of the linear form 
$\ell(\cdot)$. Note that $\tD$ is the Gram matrix 
between basis functions  of $\bM_h$ and $\bV_h$. 
This matrix is a square matrix under Assumption 
\ref{A1A2}(i).  

We now briefly discuss the advantage of using 
the dual Lagrange multiplier space. 
Working with the dual Lagrange multiplier space  
$\tD$ will be a diagonal matrix due to 
the biorthogonality relation between the bases of 
$\bM^2_h$ and $\bV_h$.  The first equation of the 
algebraic system gives 
\[ \tA \bphi_h + \tB^T u_h + \tD \bzeta_h = 0.\]
This equation can be solved for $\bzeta_h$ as 
\[ \bzeta_h = - \tD^{-1} \left(\tA \bphi_h + \tB^T u_h \right).\]
 Thus we can statically condense out 
the Lagrange multiplier $\bzeta_h$ from the saddle point system. 
This leads to a reduced and positive definite system. Hence  an 
efficient solution technique can be applied to 
solve the arising linear system. 
\section{Conclusion}
We have combined the idea of constructing discrete Lagrange 
multiplier spaces in mortar finite element techniques 
to construct discrete Lagrange multiplier spaces for 
the Reissner--Mindlin plate equations. Working with 
a dual Lagrange multiplier space results in a very 
efficient finite element method.
\section*{Acknowledgement}
Support from the new staff grant of the University of Newcastle 
 is gratefully acknowledged.
\bibliographystyle{elsart-num-sort}
\bibliography{total}
\end{document}